%

%
%
\documentclass[12pt]{article}
\usepackage{setspace} 
\usepackage[dvips]{graphicx} 
\usepackage{amsmath,amsthm,amsfonts}
\RequirePackage[colorlinks,citecolor=blue,urlcolor=blue,linkcolor=blue]{hyperref}
\hypersetup{
colorlinks = true,
citecolor=blue,
urlcolor=blue,
linkcolor=blue,
pdfauthor = {Alexey Kuznetsov},
pdfkeywords = {Levy process, Wiener-Hopf factorization, meromorphic function, infinite product, theta function},
pdftitle = {Wiener-Hopf factorization for a family of L\'evy processes related to theta functions},
pdfpagemode = UseNone
}
    \oddsidemargin -1.0cm
    \evensidemargin -1.0cm
    \topmargin -1.5cm
    \textwidth 18.7cm
    \textheight 23.5cm
    \def\qed{\hfill$\sqcap\kern-8.0pt\hbox{$\sqcup$}$\\}
    \def\beq{\begin{eqnarray}}
    \def\eeq{\end{eqnarray}}
    \def\beqq{\begin{eqnarray*}}
    \def\eeqq{\end{eqnarray*}}

    \def\ee{\textnormal {e}}
    \def\re{\textnormal {Re}}
    \def\im{\textnormal {Im}}
    \def\p{{\mathbb P}}
    \def\e{{\mathbb E}}
    \def\r{{\mathbb R}}
    \def\c{{\mathbb C}}
    	
    \def\d{{\textnormal d}}
    \def\i{{\textnormal i}}
    \def\plus{{\scriptscriptstyle +}}
    \def\minus{{\scriptscriptstyle -}}
    \def\plusminus{{\scriptscriptstyle \pm}}
    \def\phiqp{\phi_q^{\plus}}
    \def\phiqm{\phi_q^{\minus}}
    
    \def\ind{{\mathbb I}}
	\newtheorem{theorem}{Theorem}
	
	\newtheorem{proposition}{Proposition}
	
	\newtheorem{definition}{Definition}

	\newtheorem{assumption}{Assumption}
	\newtheorem{remark}{Remark}

\title{Wiener-Hopf factorization for a family of L\'evy processes related to theta functions}
\author{A. Kuznetsov
\thanks{{Research supported by the
Natural Sciences and Engineering Research Council of Canada.  We are grateful to James Langley for  
helpful suggestions related to the proof of Theorem 1. We would also like to thank Andreas Kyprianou and Juan Carlos Pardo for many stimulating and insightful discussions.}}  \\ \\ 
Dept. of Mathematics and Statistics\\  York University
\\Toronto, ON, M3J 1P3 \\  Canada \\ e-mail: kuznetsov@mathstat.yorku.ca
 }\date{current version: July 24, 2010}


\begin{document}
\maketitle

\begin{abstract}
\bigskip
In this paper we study the Wiener-Hopf factorization for a class of L\'evy processes with double-sided jumps, characterized by the fact that the density of the L\'evy measure is given by an infinite series of exponential functions with positive coefficients.
 We express the Wiener-Hopf factors as infinite products over roots of a certain transcendental equation, 
and provide a series representation for the distribution of the supremum/infimum process evaluated at an independent exponential time. 
We also introduce five eight-parameter families of L\'evy processes, defined by the fact that the density of the L\'evy measure is a 
(fractional) derivative of the theta-function, and we show that these processes can have a wide range of behavior of small jumps. 
These families of processes are of particular interest for applications, since the characteristic exponent has a simple expression, 
which allows efficient numerical computation of the Wiener-Hopf factors and distributions of various functionals of the process.
\end{abstract}

{\vskip 0.5cm}
 \noindent {\it Keywords}: L\'evy process, Wiener-Hopf factorization, meromorphic function, infinite product, theta function
{\vskip 0.5cm}
 \noindent {\it 2000 Mathematics Subject Classification }: 60G51, 60E10 

\newpage

\section{Introduction}\label{intro}

Wiener-Hopf factorizaton and related fluctuation idenitities (see \cite{Bertoin}, \cite{Doney2007}, \cite{Kyprianou} and \cite{Sato}) allow us to study various functionals of a L\'evy process, such 
as extrema, first passage time, overshoots and undershoots, etc. 
 There is a growing number of applications of these functionals and of the Wiener-Hopf factorization techniques 
in many areas of Applied Probability, most prominently in Mathematical Finance and Insurance Mathematics. 
 However the number of processes with two-sided jumps for which one can obtain 
 explicit results on fluctuation identities is by no means large. In fact until very recent times the only known examples consisted of 
 a dense subclass of stable processes (see \cite{Doney1987}), processes with phase-type distributed jumps 
(see \cite{Asmussen2} or \cite{Asmussen} ), and more generally,
 processes having positive jumps with rational transform and arbitrary negative jumps (see \cite{Mordecki}). 
In the last several years there have appeared a number of new results
on the Wiener-Hopf factorization with varying degree of explicitness. First of all we would like to mention the class of Lamperti-stable processes, 
 which can be obtained by the Lamperti transformation (see \cite{Lamperti1972}) from positive self-similar Markov processes, which are related to stable processes. 
The family of Lamperti-stable processes is very interesting (and quite unique), since many fluctuation identities can be obtained in 
closed form in terms of elementary or special functions,
see  \cite{CaCha}, \cite{Chaumont2009}, \cite{KyPaRi} and the references therein. 
Second, in \cite{Kuznetsov2009} we have introduced 
a ten-parameter family of L\'evy processes, for which  the Wiener-Hopf factors can be identified in a semi-explicit form, more precisely as
infinite products involving solutions to a certain transcendental equation. It is the goal of this paper to build on our earlier ideas and 
 to extend the results in \cite{Kuznetsov2009}, as well as to provide new analytically tractable 
examples of L\'evy processes with semi-explicit Wiener-Hopf factorization.

The main idea in \cite{Kuznetsov2009} consists in the following observation: if the characteristic exponent $\Psi(z)$ can be extended to a meromorphic function (which means that its only singularities in the complex plane are poles) 
and if we have some additional information about the asymptotic behavior of the solutions of the equation $\Psi(z)+q=0$,
then we can obtain the Wiener-Hopf factors in essentially the same way as if $\Psi(z)$ was a rational function. 
Our first contribution in this paper is to show that the same technique works in a much more general setting: 
if the L\'evy measure is an infinite series of exponential 
functions with positive coefficients then the Wiener-Hopf factorization and the distribution of extrema can be computed rather explicitly in terms 
of the solutions of the equation $\Psi(z)+q=0$. As we will see, the following property plays the most important role in the proof of our main result:
the L\'evy measure is an infinite series of exponential functions with positive coefficients if and only if for every $q\ge 0$ the zeros and poles of
 $\Psi(z)+q$ lie on the imaginary axis and interlace. As our second contribution we present
several new families of L\'evy processes which satisfy the above condition and which will be interesting for applications due to their analytic tractability.

The paper is organized as follows: in section \ref{results} we prove Theorem \ref{thm1} on the Wiener-Hopf factorization and distribution of extrema, while  in section  \ref{section_examples} we construct five eight-parameter families of L\'evy processes, for which the density of the L\'evy measure is essentially 
the fractional derivative of the theta function. For each of these five families we provide explicit formulas for the characteristic exponent and asymptotic expressions for the large roots of $q+\Psi(z)=0$, which are important  
for the efficient implementation of numerical algorithms.

\section{Main Results}\label{results}

First let us present several definitions and notations which will be used in this paper. We define the open/closed upper half plane as
 \beqq
\c^{\plus}=\{z \in \c: \im(z)>0\}, \;\;\; \bar \c^{\plus}=\{z \in \c: \im(z)\ge 0\}, 
 \eeqq
and similarly for the negative half plane and for the positive/negative real half line.
We will study a one-dimensional L\'evy process $X$ started from zero, which is defined by the characteristic triple 
$(\mu,\sigma,\Pi)$.
The characteristic exponent $\Psi(z)=-\ln(\e [\exp(\i z X_1)])$ can be computed via the L\'evy-Khintchine formula (see \cite{Bertoin}) as follos
 \beq\label{def_Psi} 
 \Psi(z)=\frac 12 \sigma^2 z^2 -\i \mu z -\int\limits_{\r} \left(e^{\i z x}-1-\i z x h(x) \right) \Pi(\d x), \;\;\; z\in \r, 
 \eeq
where $h(x)$ is the cutoff function (everywhere in this paper we will use $h(x)\equiv 0$ or $h(x) \equiv 1$ as the measure $\Pi(\d x)$ will have exponential tails).
 We define the supremum/infimum processes as
\beqq
S_t=\sup\limits_{0\le s \le t} X_s, \;\;\; I_t=\inf\limits_{0\le s \le t} X_s.
\eeqq
Wiener-Hopf factors (see \cite{Bertoin}, \cite{Doney2007}, \cite{Kyprianou} or \cite{Sato}) are defined as
\beqq 
\phiqp(z)=\e\left[e^{\i z S_{\ee(q)}}\right], \; \textnormal{ for } \; z\in \bar \c^{\plus},  \;\;\;
 \phiqm(z)=\e\left[e^{\i z I_{\ee(q)}}\right], \; \textnormal{ for } \; z\in \bar \c^{\minus},
\eeqq
 where  the random variable $\ee(q)$ is  exponentially distributed  with parameter $q>0$ and is independent of the process $X$.

In order to specify the L\'evy measure $\Pi(\d x)$ we start with the
 four sequences of positive numbers $\{a_n,\rho_n,\hat a_n,\hat\rho_n\}_{n\ge 1}$, and assume that the sequences 
$\{\rho_n\}_{n \ge 1}$ and $\{\hat \rho_n\}_{n\ge 1}$ are stricly increasing, and that $\rho_n \to +\infty$ 
and $\hat \rho_n \to +\infty$ as $n\to +\infty$. 

\begin{assumption}\label{assumption_an_rhon}
Series $\sum_{n\ge 1 } a_n \rho_n^{-2}$ and $\sum_{n\ge 1 } \hat a_n \hat \rho_n^{-2}$ converge.
\end{assumption}

Next we define the function $\pi(x)$ as 
\beq\label{def_nu_ab}
\pi(x)=\ind(x>0) \sum\limits_{n\ge 1} a_n \rho_n e^{-\rho_n x}+ \ind(x<0) \sum\limits_{n\ge 1} \hat a_n \hat \rho_n e^{\hat \rho_n x}.
\eeq
It is easy to see that the above series converges for all $x \ne 0$, uniformly in $x$ on $\r\setminus (-\epsilon,\epsilon)$ for all $\epsilon>0$. 
To check this one should use Assumption \ref{assumption_an_rhon} and the inequality $\exp(-\rho_n \epsilon)<\rho_n^{-2}$, which holds for all $n$ large enough.
It is clear that $\pi(x)$ is a positive function which decays exponentially as $|x|\to \infty$, and as we show in the next Proposition, Assumption
\ref{assumption_an_rhon} guarantees that $\pi(x)$ can be used to define a L\'evy measure.

\begin{proposition}\label{prop_nu}
Assumption \ref{assumption_an_rhon} implies that 
  $\int_{\r} x^2 \pi(x) \d x < \infty$.
\end{proposition}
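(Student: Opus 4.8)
The plan is to show $\int_\r x^2 \pi(x)\,\d x<\infty$ by splitting the integral at the origin and handling the two half-lines symmetrically, since $\pi(x)$ is defined piecewise on $(0,\infty)$ and $(-\infty,0)$ by the two series. I will focus on the positive half-line, where $\pi(x)=\sum_{n\ge 1} a_n\rho_n e^{-\rho_n x}$; the negative half-line is identical after the substitution $x\mapsto -x$, which turns $\hat\rho_n$ and $\hat a_n$ into the governing parameters. The key observation is that all the coefficients $a_n,\rho_n$ are positive, so every term of the series is nonnegative on $x>0$, which means I can freely interchange summation and integration by Tonelli's theorem.

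First I would apply Tonelli to write
\beqq
\int_0^\infty x^2 \pi(x)\,\d x = \sum_{n\ge 1} a_n\rho_n \int_0^\infty x^2 e^{-\rho_n x}\,\d x.
\eeqq
The inner integral is an elementary Gamma-type integral: the substitution $u=\rho_n x$ gives $\int_0^\infty x^2 e^{-\rho_n x}\,\d x = \rho_n^{-3}\int_0^\infty u^2 e^{-u}\,\d u = 2\rho_n^{-3}$. Substituting this back collapses the $\rho_n$ factor and yields
\beqq
\int_0^\infty x^2 \pi(x)\,\d x = 2\sum_{n\ge 1} a_n \rho_n^{-2}.
\eeqq
By Assumption \ref{assumption_an_rhon} the series $\sum_{n\ge 1} a_n\rho_n^{-2}$ converges, so the positive half-line contributes a finite amount. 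The identical computation on the negative half-line produces $2\sum_{n\ge 1}\hat a_n\hat\rho_n^{-2}$, which is finite by the second half of Assumption \ref{assumption_an_rhon}. Adding the two finite contributions gives the claim.

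There is really no hard part to this proposition: once the positivity of all coefficients licenses the use of Tonelli's theorem, everything reduces to the explicit Gamma integral $\int_0^\infty u^2 e^{-u}\,\d u = 2$. The only point requiring the slightest care is the justification of the term-by-term integration, and that is supplied immediately by the nonnegativity of the integrand rather than by any delicate dominated-convergence argument. It is worth noting that the exponent $-2$ on $\rho_n$ in Assumption \ref{assumption_an_rhon} is precisely what is needed for the second moment to be finite; this is the same mechanism that, with two fewer powers, would give finiteness of $\int_{|x|<1} \pi(x)\,\d x$ near zero, although the present statement only concerns the integral weighted by $x^2$ and so the convergence near the origin is automatic from the $x^2$ factor and the boundedness of each exponential.
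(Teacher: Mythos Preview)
Your proof is correct and follows essentially the same approach as the paper: interchange the sum and the integral, then evaluate the Gamma-type integral $\int_0^\infty x^2 e^{-\rho_n x}\,\d x = 2\rho_n^{-3}$. The only minor difference is in the justification of the interchange: the paper truncates to $(\epsilon,\infty)$, uses uniform convergence there to integrate term by term, bounds each term, and lets $\epsilon\to 0^+$; your invocation of Tonelli's theorem via nonnegativity is cleaner and equally valid.
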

\begin{proof}
As we have mentioned above, for all $\epsilon>0$ the first series in (\ref{def_nu_ab}) converges uniformly for $x \in (\epsilon,\infty)$, 
 thus we can integrate term by term and obtain
 \beqq
  \int\limits_{\epsilon}^{\infty} x^2  \pi(x) \d x&=&\sum\limits_{n\ge 1} a_n \rho_n 
  \int\limits_{\epsilon}^{\infty} x^2 e^{- \rho_n x} \d x=
  \sum\limits_{n\ge 1} a_n \rho_n^{-2} 
  \int\limits_{\rho_n \epsilon}^{\infty} u^2 e^{-u} \d u \\ &<& \sum\limits_{n\ge 1} a_n \rho_n^{-2} 
  \int\limits_{\rho_1 \epsilon}^{\infty} u^2 e^{-u} \d u < 2\sum\limits_{n\ge 1} a_n \rho_n^{-2},
 \eeqq
where in the second step we have changed the variable of integration $x \mapsto u=\rho_n x$. 
We see that the integral on the left-hand side of the above inequality increases and is bounded as $\epsilon \to 0^+$, therefore it converges.
The convergence of the integral over $(-\infty,0)$ is proved similarly.
\end{proof}

Proposition \ref{prop_nu} allows us to use $\pi(x)$ as the density of a L\'evy measure $\Pi(\d x)=\pi(x) \d x$. 
Note that since  $\pi(x)$ decays exponentially as $x\to \infty$ we can use the cutoff function $h(x)\equiv 1$ in (\ref{def_Psi}); 
this will be the default choice everywhere in this section. The above property also allows us to work with the Laplace exponent, defined as 
\beq\label{def_Laplace_exponent}
\phi(z)=\ln\left(\e \left[e^{z X_1} \right]\right)=-\Psi(-\i z).
\eeq

\begin{proposition}\label{prop_eq_Psi}
The Laplace exponent $\phi(z)$ is a real meromorphic function which has the following partial fraction decomposition
 \beq\label{eq_Psi}
 \phi(z)=\frac 12 \sigma^2 z^2 + \mu z
 +z^2 \sum\limits_{n\ge 1}\frac{a_n }{\rho_n(\rho_n- z)}+z^2 \sum\limits_{n\ge 1} \frac{\hat a_n}{\hat \rho_n (\hat \rho_n+z)}
, \;\;\; z\in \c.
 \eeq
\end{proposition}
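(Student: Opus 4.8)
The plan is to compute $\phi(z)=-\Psi(-\i z)$ directly from the L\'evy--Khintchine formula (\ref{def_Psi}) with $h\equiv 1$, and then to evaluate the resulting integral term by term. Substituting $z\mapsto -\i z$ into (\ref{def_Psi}) and simplifying (using $(-\i z)^2=-z^2$ and $\i(-\i z)=z$) gives
\beqq
\phi(z)=\frac 12\sigma^2 z^2+\mu z+\int_{\r}\left(\ee^{zx}-1-zx\right)\pi(x)\,\d x.
\eeqq
Since $\pi(x)$ decays like $\ee^{-\rho_1 x}$ as $x\to+\infty$ and like $\ee^{\hat\rho_1 x}$ as $x\to-\infty$, while $\ee^{zx}-1-zx=O(x^2)$ near the origin, this integral converges for $z$ in the strip $-\hat\rho_1<\re(z)<\rho_1$ and is analytic there; this is where I would establish the identity first.

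Next I would split the integral over $(0,\infty)$ and $(-\infty,0)$ and interchange summation and integration in each. For real $z$ the integrand $\ee^{zx}-1-zx$ is nonnegative (because $\ee^u\ge 1+u$), and the series defining $\pi$ in (\ref{def_nu_ab}) has positive terms, so Tonelli's theorem justifies the interchange with no extra work. The individual integrals are elementary: for real $z<\rho_n$ one computes
\beqq
\int_0^\infty\left(\ee^{zx}-1-zx\right)\ee^{-\rho_n x}\,\d x
=\frac{1}{\rho_n-z}-\frac{1}{\rho_n}-\frac{z}{\rho_n^2}
=\frac{z^2}{\rho_n^2(\rho_n-z)},
\eeqq
and the analogous computation on the negative half-line (after $x\mapsto -x$, valid for $z>-\hat\rho_n$) yields $z^2/(\hat\rho_n^2(\hat\rho_n+z))$. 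Multiplying by $a_n\rho_n$, respectively $\hat a_n\hat\rho_n$, and summing reproduces exactly the two series in (\ref{eq_Psi}), so the identity holds for real $z\in(-\hat\rho_1,\rho_1)$.

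It then remains to upgrade this identity to all of $\c$ and to verify meromorphy. On any compact set $K\subset\c$ avoiding the points $\{\rho_n\}$ and $\{-\hat\rho_n\}$ one has $|\rho_n-z|\ge \tfrac12\rho_n$ for all large $n$ (since $\rho_n\to\infty$ while $z$ stays bounded), so each summand is bounded by $2a_n/\rho_n^2$; Assumption \ref{assumption_an_rhon} together with the Weierstrass $M$-test gives locally uniform convergence, and hence each series defines a meromorphic function on $\c$ with simple poles at the $\rho_n$, respectively the $-\hat\rho_n$. Thus the right-hand side of (\ref{eq_Psi}) is meromorphic on $\c$; it agrees with $\phi$ on the interval $(-\hat\rho_1,\rho_1)$, and since both the integral representation of $\phi$ and the series are analytic on the open strip, the identity theorem extends the equality to the whole strip and meromorphic continuation then to all of $\c$. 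Realness of $\phi$ (i.e. $\phi(\bar z)=\overline{\phi(z)}$) is immediate from the fact that all the coefficients $\sigma^2,\mu,a_n,\rho_n,\hat a_n,\hat\rho_n$ are real.

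The integral evaluation is routine; the only genuine point requiring care is the passage from the strip to the whole plane. This is precisely where Assumption \ref{assumption_an_rhon} is essential, since it is exactly what guarantees convergence of the partial-fraction series, and hence that the Laplace exponent extends to a meromorphic function rather than merely an analytic one on the strip.
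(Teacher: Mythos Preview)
Your proof is correct and follows the same approach as the paper: start from the L\'evy--Khintchine formula with $h\equiv 1$, integrate the series (\ref{def_nu_ab}) term by term, and rearrange. The paper's own proof is a one-line sketch (``Use (\ref{def_Psi}), (\ref{def_nu_ab}) and (\ref{def_Laplace_exponent}), integrate term by term and rearrange''); you have simply supplied the details it omits, including the Tonelli justification, the explicit integral evaluations, the $M$-test argument for meromorphy via Assumption~\ref{assumption_an_rhon}, and the analytic continuation from the strip.
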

\begin{proof}
 Use  (\ref{def_Psi}), (\ref{def_nu_ab}) and (\ref{def_Laplace_exponent}), integrate term by term and rearrange the resulting infinite series.
\end{proof}

\begin{proposition}\label{prop_localization}
Assume that $q>0$.  Equation $\phi(z)=q$ has solutions $\{\zeta_n,-\hat \zeta_n\}_{n\ge 1}$, where $\{\zeta_n\}_{n\ge 1}$
and $\{\hat \zeta_n\}_{n\ge 1}$ are sequences of positive numbers which satisfy the following interlacing
property
 \beq\label{eq_localization}
 && 0< \zeta_1 < \rho_1 < \zeta_2 < \rho_2 < \dots \\ \nonumber
 && 0< \hat \zeta_1 < \hat \rho_1 < \hat \zeta_2 < \hat \rho_2 < \dots 
 \eeq 
\end{proposition}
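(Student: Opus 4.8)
The plan is to reduce everything to the behaviour of $\phi$ on the real line, exploiting that by Proposition \ref{prop_eq_Psi} the Laplace exponent is real meromorphic, so that $\phi(\bar z)=\overline{\phi(z)}$ (hence any non-real zeros of $\phi(z)-q$ occur in conjugate pairs), that the poles of $\phi$ are exactly the simple poles $\{\rho_n\}_{n\ge1}$ and $\{-\hat\rho_n\}_{n\ge1}$ read off from (\ref{eq_Psi}), and that $\phi(0)=0$. First I would pin down the boundary behaviour of $\phi$ on $\r$. Fixing $n$ and isolating the singular summand $a_n z^2/(\rho_n(\rho_n-z))$ in (\ref{eq_Psi}), the remaining series stays bounded near $z=\rho_n$ by the same reasoning as in the proof of Proposition \ref{prop_nu} (this is where Assumption \ref{assumption_an_rhon} enters); since $a_n>0$ this yields $\phi(x)\to+\infty$ as $x\uparrow\rho_n$ and $\phi(x)\to-\infty$ as $x\downarrow\rho_n$. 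The symmetric computation at $-\hat\rho_n$ gives $\phi(x)\to+\infty$ as $x\downarrow-\hat\rho_n$ and $\phi(x)\to-\infty$ as $x\uparrow-\hat\rho_n$.

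Existence of the roots and their interlacing positions then follow from the intermediate value theorem. On each interval $(\rho_n,\rho_{n+1})$ the function $\phi$ is continuous and runs from $-\infty$ to $+\infty$, so it attains the level $q$; on $(0,\rho_1)$ it runs from $\phi(0)=0$ up to $+\infty$ and, since $q>0$, again attains $q$. Setting $\rho_0:=0$, this produces a root $\zeta_n\in(\rho_{n-1},\rho_n)$ for every $n\ge1$, and the mirror argument on the negative axis produces $-\hat\zeta_n$ with $\hat\zeta_n\in(\hat\rho_{n-1},\hat\rho_n)$, which is exactly the interlacing asserted in (\ref{eq_localization}).

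The real content is uniqueness of the root in each interval, equivalently that (\ref{eq_localization}) lists every solution and that no solution lies off $\r$. In the central strip $-\hat\rho_1<z<\rho_1$ this is immediate: there $\phi(z)=\ln(\e[e^{zX_1}])$ is a genuine cumulant generating function, hence strictly convex (alternatively, every term of the series for $\phi''$ obtained from (\ref{eq_Psi}) is positive on this strip), so with $\phi(0)=0<q$ and $\phi\to+\infty$ at both endpoints it meets the level $q$ exactly once on each side of its minimum, fixing $\zeta_1$ and $-\hat\zeta_1$ uniquely and excluding any further solution in the strip. Outside the strip, however, $\phi$ need not be monotone between consecutive poles: differentiating (\ref{eq_Psi}) shows that the summand attached to a pole $\rho_m$ contributes to $\phi'(z)$ a term of sign $\mathrm{sgn}(2\rho_m-z)$ for real $z>0$, which is negative once $z>2\rho_m$, so a bare monotonicity argument breaks down. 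This is the main obstacle.

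I would overcome it by a global counting argument rather than by monotonicity. One route is to apply the argument principle to $\phi(z)-q$ along a sequence of circles $|z|=R_k\to\infty$ chosen to avoid the poles, and to show that the increment of $\arg(\phi(z)-q)$ forces the number of zeros inside each circle to equal precisely the number of interlacing real roots already produced, thereby ruling out both extra real roots and any non-real roots. An equivalent route is to truncate (\ref{eq_Psi}) after its first $N$ terms, obtaining a rational function $\phi_N$ for which the finite interlacing can be established directly, and then to pass to the limit through Hurwitz's theorem. In either case the delicate ingredient is uniform asymptotic control of $\phi$ away from the real axis, since the sequences $\{a_n,\rho_n\}$ and $\{\hat a_n,\hat\rho_n\}$ are constrained only by Assumption \ref{assumption_an_rhon}; securing that estimate is the step I expect to require the most care.
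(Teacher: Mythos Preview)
Your existence argument in the first two paragraphs is correct and is essentially the paper's proof: isolate the singular term at each pole to read off the one-sided limits of $\phi$ on the real line, then apply the Intermediate Value Theorem on $(0,\rho_1)$, on each $(\rho_n,\rho_{n+1})$, and symmetrically on the negative axis. The paper organises this slightly differently, splitting $\phi(z)=q$ as $\phi_1(z)=\phi_2(z)$ with $\phi_1$ the series part and $\phi_2(z)=q-\tfrac12\sigma^2 z^2-\mu z$, but the content is the same.

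Where you go astray is in the scope of the statement. Proposition~\ref{prop_localization} asserts only that \emph{some} real solutions exist and interlace with the poles; it does not claim uniqueness in each interval, nor that these exhaust all solutions, nor that there are no complex solutions. The paper says this explicitly in the paragraph immediately following the proposition: ``It is important to emphasize that Proposition~\ref{prop_localization} does not claim that the sequences $\{\zeta_n,-\hat\zeta_n\}_{n\ge1}$ include \emph{all} solutions\dots\ the proof of this statement is not trivial and requires some deep results from the theory of meromorphic functions.'' So your third and fourth paragraphs attack a harder statement that is not part of this proposition, and the ``main obstacle'' you identify is not an obstacle to proving what is actually claimed.

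For the record, the completeness of the root list is established later, inside the proof of Theorem~\ref{thm1}, via the Chebotarev--Meiman and Levin characterisations (Theorems~\ref{thm_Levin} and~\ref{thm_Chebotarev}) of real meromorphic functions mapping $\c^{\plus}$ to $\c^{\plus}$: one checks that $(\phi(z)-q)/z$ has a partial fraction decomposition of the form (\ref{Cheb_part_fractions}), hence maps $\c^{\plus}$ to $\c^{\plus}$, and then the infinite product representation (\ref{Levin_inf_product}) forces all zeros to be real and to interlace with the poles. This bypasses both the argument-principle estimate and the truncation-plus-Hurwitz route you sketch.
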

\begin{proof}
Using (\ref{eq_Psi}) we rewrite the equation $\phi(z)=q$ as 
 \beq\label{eq_Psi_iz}
  z^2 \sum\limits_{n\ge 1}\frac{a_n }{\rho_n(\rho_n- z)}+z^2 \sum\limits_{n\ge 1} \frac{\hat a_n}{\hat \rho_n (\hat \rho_n+z)}= 
 q - \frac 12 \sigma^2 z^2 - \mu z.
 \eeq
Denote the left-hand side of (\ref{eq_Psi_iz}) as $\phi_1(z)$ and the right-hand side as $\phi_2(z)$. 
First let us  check that there exists a solution to (\ref{eq_Psi_iz}) on intervals $(0,\rho_1)$ and $(-\hat \rho_1,0)$.  
We observe that $\phi_1(0)=0$  and
$\phi_1(z) \nearrow +\infty$ as $z \nearrow \rho_{1}$ or as  $z \searrow -\hat \rho_{1}$; at the same time function $\phi_2(z)$ is continuous and
 $\phi_2(0)>0$, all that is left to do is to apply the Intermediate Value Theorem. Other intervals can be verified in a similar way.
\end{proof}

It is important to emphasize that Proposition  \ref{prop_localization} does not claim that the sequences $\{\zeta_n, -\hat \zeta_n\}_{n\ge 1}$ include {\it all}
solutions to $\phi(z)=q$. The statement is that {\it some} solutions are real and that they interlace with the poles $\{\rho_n, -\hat \rho_n\}_{n\ge 1}$. As we will see later, it is in fact true 
that there are no other solutions, but the proof of this statement is not trivial and requires some deep results from the theory of meromorphic functions.  
.

Our first main result in this paper is the following Theorem, which identifies the Wiener-Hopf factors and the distribution of extrema of the process $X$.
\begin{theorem}\label{thm1}
Assume that $q>0$. Then for $\re(z)>0$ 
\beq\label{eq_WH}
 \phiqp(\i z)=\e \left[ e^{-z S_{\ee(q)}} \right]= \prod\limits_{n\ge 1} \frac{1+\frac{z}{\rho_n}}{1+\frac{z}{\zeta_n}}, \;\;\;
\phiqm(-\i z)=\e \left[ e^{z I_{\ee(q)}} \right]= \prod\limits_{n\ge 1} \frac{1+\frac{z}{\hat \rho_n}}{1+\frac{z}{\hat \zeta_n}}.
\eeq
The distribution of $S_{\ee(q)}$ can be identified as an infinite mixture of exponential distributions
 \beq\label{eq_Stau} 
\p(S_{\ee(q)}=0)=c_0, \qquad \frac{\d }{\d x} \p(S_{\ee(q)} \le x)=
 \sum\limits_{n \ge 1} c_n \zeta_{n} e^{-\zeta_{n}x}, \;\;\; x>0,
\eeq
where the coefficients $\{c_n\}_{n\ge 0}$  are positive, satisfy $\sum_{n\ge 0} c_n=1$, and can be computed as
\beq\label{def_coeff_c0_cn}
c_0=\lim_{n\to +\infty}\prod\limits_{k=1}^n \frac{\zeta_k}{\rho_k}, \;\;\;
c_n=\left(1-\frac{\zeta_n}{\rho_n}\right) \prod\limits_{\substack{k\ge 1 \\ k\ne n}}  \frac{1-\frac{\zeta_n}{\rho_k}}{1-\frac{\zeta_n}{\zeta_k}}.
\eeq
The distribution of $-I_{\ee(q)}$ has the same form as above, with $\{\rho_n, \zeta_n\}$ replaced by $\{\hat \rho_n, \hat \zeta_n\}$. 
\end{theorem}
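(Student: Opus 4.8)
The plan is to reduce Theorem \ref{thm1} to a single meromorphic identity and then invoke the uniqueness of the Wiener-Hopf factorization. Write
\begin{equation*}
f(z)=\prod\limits_{n\ge1}\frac{1+\frac{z}{\rho_n}}{1+\frac{z}{\zeta_n}},
\qquad
g(z)=\prod\limits_{n\ge1}\frac{1+\frac{z}{\hat\rho_n}}{1+\frac{z}{\hat\zeta_n}}
\end{equation*}
for the two candidate products. Their convergence is immediate from the interlacing in Proposition \ref{prop_localization}: since $\zeta_n<\rho_n<\zeta_{n+1}$ one has $0<\frac{1}{\zeta_n}-\frac{1}{\rho_n}<\frac{1}{\zeta_n}-\frac{1}{\zeta_{n+1}}$, and the right-hand side telescopes to $1/\zeta_1$, so $\sum_n(\frac{1}{\zeta_n}-\frac{1}{\rho_n})<\infty$ and $f$ (and likewise $g$) defines a real meromorphic function with simple zeros at $-\rho_n$ and simple poles at $-\zeta_n$. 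Recalling the analytic Wiener-Hopf factorization $q/(q+\Psi(\xi))=\phiqp(\xi)\phiqm(\xi)$, the relation $\Psi(\xi)=-\phi(\i\xi)$ from (\ref{def_Laplace_exponent}), and the fact that $S_{\ee(q)}$ is independent of $X_{\ee(q)}-S_{\ee(q)}$, which has the same law as $I_{\ee(q)}$, so that $\e[e^{-zX_{\ee(q)}}]=\e[e^{-zS_{\ee(q)}}]\,\e[e^{-zI_{\ee(q)}}]=q/(q-\phi(-z))$, the whole theorem follows once I establish the meromorphic identity $q/(q-\phi(z))=f(-z)\,g(z)$ on $\c$.

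To prove this identity I would form the ratio $R(z)=q\,\big[(q-\phi(z))\,f(-z)\,g(z)\big]^{-1}$. By Proposition \ref{prop_eq_Psi} the poles of $q-\phi(z)$ are exactly $\{\rho_n,-\hat\rho_n\}$, and they cancel against the zeros of $f(-z)$ and $g(z)$; by Proposition \ref{prop_localization} the points $\{\zeta_n,-\hat\zeta_n\}$ are zeros of $q-\phi(z)$ that cancel the poles of $f(-z)g(z)$. Thus $R$ is holomorphic and non-vanishing except possibly at extra zeros of $q-\phi$. The crux of the argument, and the step I expect to be the main obstacle, is precisely the statement flagged in the remark after Proposition \ref{prop_localization}: that $q-\phi$ has \emph{no} zeros beyond the real interlacing family $\{\zeta_n,-\hat\zeta_n\}$. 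I would prove this by an argument-principle / Phragm\'en--Lindel\"of estimate, controlling the growth of $\phi(z)$ in sectors away from the two real half-axes where the poles accumulate, and counting the zeros of $q-\phi$ in an expanding sequence of disks so as to match the number of real roots already produced by the interlacing; this is the genuinely non-elementary input from the theory of real meromorphic functions. Granting it, $R$ is entire and zero-free, and a growth bound (the convergence of the series in Assumption \ref{assumption_an_rhon} controls the order of $\phi$, hence of $R$) shows $R(z)=\exp(\alpha z+\beta)$; comparing the two sides as $z\to\infty$ along the imaginary axis forces $\alpha=0$, and $R(0)=1$ (since $\phi(0)=0$ and $f(0)=g(0)=1$) gives $R\equiv1$. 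As $f$ is holomorphic and zero-free in $\re(z)>0$ with $f(0)=1$, and $g$ has the analogous property, the uniqueness of the Wiener-Hopf factorization then identifies $\phiqp(\i z)=f(z)$ and $\phiqm(-\i z)=g(z)$, which is (\ref{eq_WH}).

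Finally, to obtain the law of $S_{\ee(q)}$ in (\ref{eq_Stau})--(\ref{def_coeff_c0_cn}) I would expand $\e[e^{-zS_{\ee(q)}}]=f(z)$ into partial fractions. Since $f$ has only simple poles, at the points $-\zeta_n$, and a finite limit as $z\to+\infty$, it admits the representation $f(z)=c_0+\sum_{n\ge1}c_n\zeta_n/(z+\zeta_n)$, which is the Laplace transform of the measure $c_0\delta_0(\d x)+\sum_{n\ge1}c_n\zeta_n e^{-\zeta_n x}\d x$. Reading off $c_0=\lim_{z\to+\infty}f(z)=\prod_n\zeta_n/\rho_n$ and $c_n=\zeta_n^{-1}\,\mathrm{Res}_{z=-\zeta_n}f(z)$ yields exactly the formulas in (\ref{def_coeff_c0_cn}) after the cancellation $(z+\zeta_n)/(1+z/\zeta_n)\to\zeta_n$. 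Positivity of each $c_n$ follows from the interlacing (\ref{eq_localization}) by a sign count: for $k<n$ both $1-\zeta_n/\rho_k$ and $1-\zeta_n/\zeta_k$ are negative, for $k>n$ both are positive, and the prefactor $1-\zeta_n/\rho_n$ is positive; the normalization $\sum_{n\ge0}c_n=1$ is the identity $f(0)=1$. Laplace inversion then produces the atom $c_0$ at the origin and the mixed-exponential density, and the statement for $-I_{\ee(q)}$ is obtained verbatim from $g$ with $\{\rho_n,\zeta_n\}$ replaced by $\{\hat\rho_n,\hat\zeta_n\}$.
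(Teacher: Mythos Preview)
Your overall architecture — establish the meromorphic identity $q/(q-\phi(z))=f(-z)g(z)$ and then invoke uniqueness of the Wiener--Hopf factorization — is exactly the paper's. Where you and the paper diverge is in how that identity is obtained.

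The paper does not count zeros or estimate growth. Instead it observes, via the partial-fraction formula of Proposition~\ref{prop_eq_Psi}, that $(\phi(z)-q)/z$ has precisely the form (\ref{Cheb_part_fractions}) of Theorem~\ref{thm_Chebotarev}, and hence is a Pick function (it maps $\c^{\plus}$ into itself). Theorem~\ref{thm_Levin} then asserts that any such function \emph{is} the canonical interlacing product (\ref{Levin_inf_product}); since the poles are already known to be $\{0,\rho_n,-\hat\rho_n\}$ and Proposition~\ref{prop_localization} supplies one zero in each gap, Levin's theorem delivers both the completeness of the real zero set and the product formula in a single stroke. The same pair of theorems is then reused on $zf(z)$ to obtain the convergent partial-fraction expansion $f(z)=c_0+\sum_n c_n\zeta_n/(z+\zeta_n)$ with $c_n\ge 0$, after which infinite divisibility of the mixture and the uniqueness argument run exactly as you sketch.

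Your alternative — argument principle for the zero count, then bound the order of the entire zero-free ratio $R$ — has a genuine soft spot at the growth step. Assumption~\ref{assumption_an_rhon} only demands $\sum a_n\rho_n^{-2}<\infty$; it imposes no control on the spacing or growth rate of the $\rho_n$, so there is no a~priori bound on the order of $\phi$ (or of $f$, $g$) as a meromorphic function, and hence no reason why $R$ should be $\exp(\alpha z+\beta)$ rather than $\exp(P(z))$ for a higher-degree entire $P$. The earlier paper \cite{Kuznetsov2009} does use contour and growth arguments of the kind you propose, but there the characteristic exponent is explicit and the poles grow linearly; the whole point of the present argument is to replace those case-by-case estimates with the structural Pick-function theorems, which work under Assumption~\ref{assumption_an_rhon} alone. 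Your endgame (residue formulas for $c_n$, the interlacing sign-count for strict positivity, $f(0)=1$ for the normalization) is correct and, for $c_n>0$, arguably more transparent than the paper's second appeal to Theorem~\ref{thm_Chebotarev}.
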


Our proof of the Theorem \ref{thm1} is based on the two important results from the theory of meromorphic functions, 
which we present below.  These two theorems combined together state that the L\'evy measure has the form (\ref{def_nu_ab}) if and only if for every $q\ge 0$ the zeros and poles of the real meromorphic function $\phi(z)-q$ are real and interlace, and this function
can be represented as an infinite product.
\begin{theorem}{(\cite{Levin1996}, page 220, Theorem 1)}\label{thm_Levin}
 A real meromorphic function $f(z)$ satisfies $f(z) \in \c^{\plus}$ for all $z \in \c^{\plus}$ if and only if it can be represented in the form
\beq\label{Levin_inf_product}
f(z)=c \frac{z-a_0}{z-b_0} \prod\limits_{n\in {\mathbb Z}\setminus\{0\}} \frac{1-\frac{z}{a_n}}{1-\frac{z}{b_n}},
\eeq   
where $c>0$, the zeros $a_n$ and the poles $b_n$ are real,  satisfy the interlacing property
\beqq
b_n < a_n < b_{n+1},\;\;\; n\in {\mathbb Z},
\eeqq
 and $a_{-1} < 0 < b_1$. 
\end{theorem}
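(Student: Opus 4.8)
The plan is to treat the two implications separately, since they rest on quite different ideas; the converse (the \emph{only if} direction) is where the genuine difficulty lies.

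\medskip\noindent\textbf{The \emph{if} direction.} Here I would argue directly with arguments. For fixed $z=x+\i y\in\c^{\plus}$ and real $t$, the angle $\theta(z,t)=\arg(z-t)$ increases from $0$ to $\pi$ as $t$ runs from $-\infty$ to $+\infty$, with $\partial_t\theta(z,t)=y/((x-t)^2+y^2)>0$ and total mass $\int_{\r}\partial_t\theta(z,t)\,\d t=\pi$. Writing each factor as $\tfrac{1-z/a_n}{1-z/b_n}=\tfrac{b_n}{a_n}\cdot\tfrac{z-a_n}{z-b_n}$, and noting that the interlacing together with $a_{-1}<0<b_1$ forces $a_n$ and $b_n$ to have the same sign for $n\neq 0$ (so $b_n/a_n>0$ contributes nothing to the argument, while $c>0$ and the leading factor contribute $\theta(z,a_0)-\theta(z,b_0)$), I would obtain $\arg f(z)=\sum_{n}\big(\theta(z,a_n)-\theta(z,b_n)\big)=\int_{E}\partial_t\theta(z,t)\,\d t$, where $E=\bigcup_n(b_n,a_n)$ is the disjoint union produced by $b_n<a_n<b_{n+1}$. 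Since $E$ is a proper subset of $\r$ (the intervals $(a_n,b_{n+1})$ are omitted), this integral lies strictly in $(0,\pi)$, whence $\im f(z)>0$. The only care needed is to check that the continuous branch of the argument of each partial product equals the corresponding partial sum (valid because each partial sum already lies in $(0,\pi)$) and that the termwise integration is legitimate (immediate from positivity of $\partial_t\theta$).

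\medskip\noindent\textbf{The \emph{only if} direction, structural part.} Assume $f$ is real meromorphic with $f(\c^{\plus})\subseteq\c^{\plus}$. I would first locate all singularities on $\r$. A pole in $\c^{\plus}$ is impossible: near a pole of order $k$ one has $f(z)\sim C(z-z_0)^{-k}$, and as $z$ traverses a small arc in $\c^{\plus}$ the argument of $f$ sweeps an interval of length $k\pi$, which for $z_0\in\c^{\plus}$ would drive $f$ out of $\c^{\plus}$; the same local computation at a real pole forces $k=1$ and shows the residue is negative. Applying this to $-1/f$, which is again real meromorphic and maps $\c^{\plus}$ into $\c^{\plus}$, shows that the zeros of $f$ are likewise real and simple. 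Next, the Herglotz property yields $f'(x)>0$ at every regular real point (from $\im f(x+\i\epsilon)>0$ and $f(x)\in\r$), while the negative residues give $f(x)\to+\infty$ as $x\to b_n^-$ and $f(x)\to-\infty$ as $x\to b_n^+$. Hence $f$ increases from $-\infty$ to $+\infty$ on each interval between consecutive poles and has exactly one zero there, producing real, simple, interlacing zeros and poles; the condition $a_{-1}<0<b_1$ is then merely a normalization of the indexing.

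\medskip\noindent\textbf{Reconstruction and the main obstacle.} With the zeros $\{a_n\}$ and poles $\{b_n\}$ of $f$ in hand, I would form the candidate product $P(z)$ on the right-hand side of (\ref{Levin_inf_product}) and study $Q=f/P$. By construction $Q$ is entire and zero-free, so $Q=e^{g}$ for an entire $g$ real on $\r$, and the goal reduces to $g\equiv\text{const}$. Granting that $g$ is at most linear, $g(z)=\gamma z+\delta$, the argument finishes quickly: by the \emph{if} direction $\arg P\in(0,\pi)$, so $\arg f=\gamma\,\im z+\arg P$ would be unbounded along the imaginary axis unless $\gamma=0$, contradicting $\arg f\in(0,\pi)$, after which $e^{\delta}>0$ is absorbed into the constant $c$. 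The crux, and the step I expect to be the main obstacle, is exactly the two analytic inputs that must precede this argument: (i) convergence of the product, i.e. $\sum_n|1/a_n-1/b_n|<\infty$, and (ii) the order/genus estimate guaranteeing that $Q=e^{g}$ forces $g$ linear. Both rest on the fact that a function carrying $\c^{\plus}$ into $\c^{\plus}$ is of bounded type there, so that its Nevanlinna characteristic controls the density of $\{a_n\},\{b_n\}$ and the growth of $f$; this is precisely the deep input from the theory of meromorphic functions, and it is what makes ruling out a spurious exponential factor nontrivial.
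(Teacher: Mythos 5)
First, a point of reference: the paper does not prove this statement at all --- it is quoted verbatim from Levin (\cite{Levin1996}, p.~220) and used as a black box, so there is no in-paper proof to compare against; your proposal has to be judged on its own merits. On those merits: your \emph{if} direction (the argument-sweep computation $\arg f(z)=\int_E \partial_t\theta(z,t)\,\d t$ over the disjoint union $E=\bigcup_n(b_n,a_n)$, giving $\arg f\in(0,\pi)$) is correct and complete, and your structural analysis in the \emph{only if} direction (real simple poles with negative residues, real simple zeros via $-1/f$, strict monotonicity between poles, hence interlacing) is sound. The genuine gap is exactly where you flag it: the reconstruction step. ``Granting that $g$ is at most linear'' is not a proof --- it is the entire analytic content of the direction, and your pointer to bounded-type/Nevanlinna-characteristic theory is a citation of machinery, not an argument. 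As written, the proposal establishes $f=e^{g}P$ with $g$ entire and real on $\r$, and then stops short of controlling $g$.

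Two remarks on how the gap should be closed, both of which make the proof more elementary than you anticipate. (i) Your item on convergence, $\sum_n|1/a_n-1/b_n|<\infty$, needs no deep input: interlacing alone gives it by telescoping, since $0<b_1<a_1<b_2<\dots$ implies
\beqq
\sum\limits_{n\ge 1}\left(\frac{1}{b_n}-\frac{1}{a_n}\right)<\sum\limits_{n\ge 1}\left(\frac{1}{b_n}-\frac{1}{b_{n+1}}\right)=\frac{1}{b_1},
\eeqq
which is precisely the trick the paper itself uses to prove convergence of the products in its Theorem 1. (ii) The exponential factor can be excluded without any growth theory: let $P$ denote the candidate product and $h=f/P$. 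Then $h$ is entire and zero-free, positive on $\r$ (both $f$ and $P$ increase from $-\infty$ to $+\infty$ between consecutive poles and share zeros, so their signs agree), so $g=\log h$ is a well-defined entire function, real on $\r$. By your own \emph{if} direction applied to both $f$ and $P$, $\arg f$ and $\arg P$ lie in $(0,\pi)$ on $\c^{\plus}$, so $|\im g|<\pi$ there, and by the reflection symmetry $h(\bar z)=\overline{h(z)}$ the same bound holds on $\c^{\minus}$; hence $e^{\i g}$ is a bounded entire function, constant by Liouville, so $g$ is constant and $h\equiv c>0$. This kills the putative linear term $\gamma z$ in one stroke rather than by your separate boundedness-of-argument step. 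A last caveat, which is a defect of the quoted statement rather than of your proof: the \emph{only if} direction as stated implicitly assumes doubly infinite sequences of zeros and poles (rational Herglotz functions such as $f(z)=z$ or $f(z)=-1/z$ are not of the form (\ref{Levin_inf_product})), so a fully rigorous write-up must either assume infinitely many poles in both directions --- which holds in the paper's application --- or allow degenerate finite products.
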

\begin{theorem}{(\cite{Chebotarev1949}, page 197, Theorem 1)}\label{thm_Chebotarev}
  A real meromorphic function $f(z)$ satisfies $f(z) \in \c^{\plus}$ for all $z \in \c^{\plus}$ if and only if it can be represented in the form
 \beq\label{Cheb_part_fractions}
 f(z)=\alpha z+\beta+\frac{B_0}{b_0-z}+\sum\limits_{n \in {\mathbb Z}\setminus\{0\}} B_n \left[ \frac{1}{b_n-z}-\frac{1}{b_n} \right],
 \eeq
where the poles $b_n$ are real, $b_n<b_{n+1}$ and $b_{-1}<0<b_1$,  $\alpha \ge 0$, $\beta \in \r$,  $B_n \ge 0$ for $n\ge 0$ and the series $\sum B_n b_n^{-2}$ converges.
\end{theorem}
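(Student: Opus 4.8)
The plan is to recognize that the hypothesis ``$f(z) \in \c^{\plus}$ for all $z \in \c^{\plus}$'' is exactly the defining property of a Nevanlinna (Herglotz--Pick) function, and to build the proof on the classical Nevanlinna integral representation, collapsing the integral to a sum by exploiting meromorphy. I would treat the two implications separately and dispose of the easy (``if'') direction first.

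For the ``if'' direction, given $f$ of the form (\ref{Cheb_part_fractions}), I would check term by term that $\im f(z) > 0$ on $\c^{\plus}$: the term $\alpha z$ contributes $\alpha \im(z) \ge 0$; the constants $\beta$ and the subtracted $1/b_n$ are real; and for each real $b_n$ and $z = x + \i y$ with $y > 0$ one has $\im[(b_n - z)^{-1}] = y/((b_n - x)^2 + y^2) > 0$. Since all $B_n \ge 0$ and $f$ is nonconstant, the imaginary part is strictly positive. The only analytic point is local uniform convergence on $\c \setminus \{b_n\}$, which follows from the identity $\frac{1}{b_n - z} - \frac 1{b_n} = \frac{z}{b_n(b_n - z)}$, of size $O(b_n^{-2})$ for $z$ in a compact set avoiding the poles, together with $\sum B_n b_n^{-2} < \infty$. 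Hence $f$ is a real meromorphic function mapping $\c^{\plus}$ into $\c^{\plus}$.

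For the ``only if'' direction I would argue as follows. A function mapping $\c^{\plus}$ into $\c^{\plus}$ is analytic there (a pole $z_0 \in \c^{\plus}$ would force $f$ to take all values of large modulus near $z_0$, including values in $\c^{\minus}$), so $f$ is a Nevanlinna function and all its poles are real; a local analysis at a real pole $b_n$, writing $f(z) = c(z - b_n)^{-k} + \cdots$ and tracking $\arg(c\,\ee^{-\i\theta})$ as $z - b_n = r\ee^{\i\theta}$ sweeps the upper half disc, shows $k = 1$ and $c = -B_n$ with $B_n > 0$, i.e. every pole is simple with $B_n = -\mathrm{Res}_{b_n} f > 0$. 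I would then invoke the Nevanlinna representation $f(z) = \alpha z + \beta_0 + \int_{\r} \big(\frac{1}{t - z} - \frac{t}{1 + t^2}\big) \d\mu(t)$ with $\alpha \ge 0$, $\beta_0 \in \r$ and $\mu \ge 0$ satisfying $\int (1+t^2)^{-1} \d\mu < \infty$. The key reduction is that meromorphy forces $\mu$ to be purely atomic and supported on the poles: by Stieltjes inversion $\mu((c,d)) = \lim_{y \to 0^+} \frac 1\pi \int_c^d \im f(x + \i y)\, \d x$, and since $f$ is real-analytic across every point of $\r$ that is not a pole, $\im f(x + \i y) \to 0$ there, so $\mu$ gives zero mass to any interval free of poles. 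Hence $\mu = \sum_n B_n \delta_{b_n}$, the atoms matching the residues already identified.

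Substituting the atomic measure gives $f(z) = \alpha z + \beta_0 + \sum_n B_n \big(\frac{1}{b_n - z} - \frac{b_n}{1 + b_n^2}\big)$, and to reach (\ref{Cheb_part_fractions}) I would replace the renormalizing constants $\frac{b_n}{1+b_n^2}$ by $\frac 1{b_n}$: their difference is $-1/(b_n(1+b_n^2)) = O(b_n^{-3})$, so $\sum_{n \ne 0} B_n\big(\frac{b_n}{1+b_n^2} - \frac 1{b_n}\big)$ converges absolutely by comparison with $\sum B_n b_n^{-2}$ and is absorbed into a new real constant $\beta$, while the term at $b_0$ (taken as the pole at or nearest the origin) is left unrenormalized. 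Listing the pole set, which is discrete with no finite accumulation point, in increasing order gives $b_n < b_{n+1}$, and $b_{-1} < 0 < b_1$ is merely an indexing normalization. The main obstacle is this reduction step: establishing (or citing) the Nevanlinna representation and then rigorously showing, via Stieltjes inversion and the vanishing of $\im f$ at points of analyticity, that the representing measure collapses to the atomic sum over the poles; everything else is the ``if'' verification plus routine ordering and constant bookkeeping. One could alternatively avoid citing the Nevanlinna theorem by subtracting off the principal parts directly and proving that the entire Herglotz remainder is affine, but the analytic crux — controlling convergence and pinning down the global structure — is the same.
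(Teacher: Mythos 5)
The paper does not prove this statement: Theorem \ref{thm_Chebotarev} is quoted verbatim, with a page reference, from the cited monograph of Chebotarev and Meiman, so there is no internal proof to compare against. Your proposal is a correct, essentially complete proof along the standard modern route: recognize the hypothesis as the Herglotz--Nevanlinna (Pick) property, invoke the integral representation $f(z)=\alpha z+\beta_0+\int_{\r}\bigl(\frac{1}{t-z}-\frac{t}{1+t^2}\bigr)\d\mu(t)$ with $\alpha\ge 0$ and $\int(1+t^2)^{-1}\d\mu<\infty$, and use Stieltjes inversion together with meromorphy --- $f$ is real-analytic and real-valued on $\r$ off the pole set, so $\im f(x+\i y)\to 0$ there --- to collapse $\mu$ to atoms $B_n\delta_{b_n}$ at the poles, which your local argument at a real pole correctly shows to be simple with residue $-B_n<0$. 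This buys brevity at the price of importing the Nevanlinna representation as a black box, whereas the cited source (which belongs to the Routh--Hurwitz tradition) is self-contained classical function theory; your closing remark about instead subtracting principal parts and showing the Herglotz remainder is affine describes exactly that more elementary alternative, with the same analytic crux.

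Three routine points should be tightened. First, in the ``if'' direction, the degenerate case $\alpha=0$, $B_n\equiv 0$ gives the real constant $f\equiv\beta$, which does not map $\c^{\plus}$ into the open half-plane $\c^{\plus}$; strict positivity of $\im f$ requires $\alpha>0$ or some $B_n>0$, which the form (\ref{Cheb_part_fractions}), read with the pole $b_0$ genuinely present, implicitly supplies. Second, you compare the renormalization error $\sum_{n\ne 0} B_n\bigl(\frac{b_n}{1+b_n^2}-\frac{1}{b_n}\bigr)$ against $\sum B_n b_n^{-2}$ before that series has been established: derive it first from $\sum B_n(1+b_n^2)^{-1}<\infty$ (the Nevanlinna mass condition evaluated on the atoms) together with $|b_n|\ge\min(|b_{-1}|,b_1)>0$ for $n\ne 0$. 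Third, passing to the limit in the inversion formula $\mu((c,d))=\lim_{y\to 0^+}\frac{1}{\pi}\int_c^d \im f(x+\i y)\,\d x$ requires the vanishing of $\im f(x+\i y)$ to be uniform on compact subintervals avoiding the poles; your real-analyticity observation does yield this, but it deserves an explicit sentence. None of these affects the correctness of the overall scheme.
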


{\it Proof of Theorem \ref{thm1}:}
First let us prove that the infinite products in (\ref{eq_WH}) converge. The product $\prod b_n$ converges if and only if the series $\sum (b_n-1)$ converges, thus we need to consider the series 
\beq\label{proof1}
\sum\limits_{n\ge 1} \left[ \frac{1+\frac{z}{\rho_n}}{1+\frac{z}{\zeta_n}} -1 \right]=
z \sum\limits_{n\ge 1} \frac{1}{1+\frac{z}{\zeta_n}} \left( \frac{1}{\rho_n} - \frac{1}{\zeta_n}\right). 
\eeq
Using the interlacing property (\ref{eq_localization}) we find that
\beqq
0 < \dots < \rho_2^{-1} < \zeta_2^{-1} < \rho_1^{-1} < \zeta_1^{-1},
\eeqq
thus we obtain
\beqq
 0< \sum\limits_{n\ge 1} \left( \frac{1}{\zeta_n} - \frac{1}{\rho_n}\right)< \sum\limits_{n\ge 1} \left( \frac{1}{\zeta_n} - \frac{1}{\zeta_{n+1}}\right)=\zeta_1^{-1},
\eeqq
and conclude that the series in the right-hand side of (\ref{proof1}) converges, which guarantees the convergence of the infinite products in (\ref{eq_WH}).

Next, let us establish the following infinite product factorization
\beq\label{eq_main_factorization}
\frac{q}{q-\phi(z)}=\prod\limits_{n\ge 1} \frac{1-\frac{z}{\rho_n}}{1-\frac{z}{\zeta_n}}
\prod\limits_{n\ge 1} \frac{1+\frac{z}{\hat \rho_n}}{1+\frac{z}{\hat \zeta_n}}.
\eeq
To prove this factorization, we use (\ref{eq_Psi}) and rewrite the function $(\phi(z)-q)/z$ in the following form
\beqq
\frac{\phi(z)-q}{z}=\frac 12 \sigma^2 z + \mu - \frac{q}{z}
 + \sum\limits_{n\ge 1}a_n \left( \frac{1}{\rho_n-z} - \frac{1}{\rho_n} \right) 
+ \sum\limits_{n\ge 1} \hat a_n \left( \frac{1}{-\hat \rho_n-z} - \frac{1}{-\hat \rho_n} \right).
\eeqq
Using the above equation and Theorem \ref{thm_Chebotarev} we conclude that function $(\phi(z)-q)/z$ maps the upper half plane into itself,
therefore applying Theorem \ref{thm_Levin} and Proposition \ref{prop_localization} we find that 
\beqq
\frac{\phi(z)-q}{z}=c \frac{z-\zeta_1}{z} \prod\limits_{n\ge 1} \frac{1-\frac{z}{\zeta_{n+1}}} {1-\frac{z}{\rho_n}}
 \prod\limits_{n\ge 1} \frac{1+\frac{z}{\hat \zeta_{n}}} {1+\frac{z}{\hat\rho_n}},
\eeqq 
which in turn implies (\ref{eq_main_factorization}). 

Let us introduce functions
\beq\label{def_f}
 f(z)= \prod\limits_{n\ge 1} \frac{1+\frac{z}{\rho_n}}{1+\frac{z}{\zeta_n}}, \;\;\;
 \hat f(z)=\prod\limits_{n\ge 1} \frac{1+\frac{z}{\hat \rho_n}}{1+\frac{z}{\hat \zeta_n}}.
\eeq
Using Theorem \ref{thm_Levin} we find that the real meromorphic function $zf(z)$ maps the upper halfplane into itself, 
thus again we apply Theorem \ref{thm_Chebotarev} to obtain that $zf(z)$ has the partial fraction decomposition (\ref{Cheb_part_fractions}), 
which implies that $f(z)$ has the partial fraction decomposition
 \beq\label{f_z_part_fractions}
 f(z)= c_0 + \sum\limits_{n\ge 1} \frac{c_n \zeta_n}{\zeta_n+z},   
 \eeq 
 where $c_n\ge 0$ for $n\ge 0$ and the series $\sum c_n$ converges. Setting $z=0$ in the above equation we find that $\sum c_n=1$, and therefore 
 $f(z)$ is the Laplace transform of the mixture of exponential distributions, which implies that $f(z)$ is the Laplace transform of a positive infinitely
divisible random variable with zero drift. The same result holds for $\hat f(z)$. Equation (\ref{eq_main_factorization}) tells us that 
$q/(q-\phi(z))=f(-z) \hat f(z)$, and using the uniqueness of the Wiener-Hopf factorization (see \cite{Sato}) we conclude
that $\phiqp(\i z)=f(z)$ and $\phiqm(-\i z)=\hat f(z)$, thus obtaining (\ref{eq_WH}).  

In order to finish the proof we only need to establish formulas (\ref{def_coeff_c0_cn}). The expression for $c_0$ follows from the following computations
 \beqq
  \p(S_{\ee(q)}=0)=\lim\limits_{z\to +\infty} \e \left[ e^{-z S_{\ee(q)}} \right]=\lim\limits_{z\to +\infty}
   \lim\limits_{N\to +\infty} \prod\limits_{n=1}^N \frac{1+\frac{z}{\rho_n}}{1+\frac{z}{\zeta_n}}=\lim_{N\to +\infty}\prod\limits_{n=1}^N \frac{\zeta_n}{\rho_n},
 \eeqq
where in the last step we have interchanged the two limits, which is possible due to the fact that the limit
\beqq
\lim\limits_{z\to +\infty} \prod\limits_{n=1}^N \frac{1+\frac{z}{\rho_n}}{1+\frac{z}{\zeta_n}}
\eeqq
converges uniformly in $N$. The expression for $c_n$ in (\ref{def_coeff_c0_cn}) follows from (\ref{def_f}) and 
the fact that $c_n\zeta_n$ is the residue of $f(z)$ at the pole $z=-\zeta_n$.  
\qed

\section{Examples}\label{section_examples}


Theorem \ref{thm1} gives us the Wiener-Hopf factors and the distribution of $S_{\ee(q)}$ and $I_{\ee(q)}$ for any L\'evy process
whose L\'evy measure is an infinite series of exponential functions with positive coefficients. 
All the formulas in Theorem \ref{thm1} are based only on the zeros and the poles of the 
meromorphic function $\phi(z)-q$. The poles $\{\rho_n, -\hat \rho_n\}_{n\ge 1}$ are usually known explicitly, 
 but in order to find the zeros $\{\zeta_n, -\hat \zeta_n\}_{n\ge 1}$ one has to solve the transcendental equation $\phi(z)=q$, and this 
has to be done numerically. 
It is clear that if  we have to rely on the partial fraction decomposition
(\ref{eq_Psi}) in order to evaluate $\phi(z)$, such numerical computations will be quite challenging, if not impossible. 
Thus it is very important to find families of L\'evy processes, for which the L\'evy measure is an infinite series of exponentials,
and at the same time the Laplace exponent can be computed in closed form. In \cite{Kuznetsov2009} we have introduced the $\beta$-family: a ten-parameter family of L\'evy processes, for which the characteristic exponent can be computed in terms of the beta function. A very special subclass of the $\beta$-family
is a five-parameter family of L\'evy processes (see Section 3 in \cite{Kuznetsov2009}), whose jump component behaves similarly to the normal inverse Gaussian process (see \cite{Barndorff1997}), and the characteristic exponent is expressed in
terms of elementary (trigonometric) functions. In this section we present five parametric families of L\'evy processes, which have a number of desirable properties: the characteristic exponent is expressed in terms of rather simple functions, such as trigonometric functions or the digamma function $\psi(z)=\Gamma'(z)/\Gamma(z)$ (see \cite{Jeffrey2007} for the definition and properties of the digamma function). The density of the L\'evy measure decays exponentially at infinity and has
 a singularity at zero 
 \beqq
 \pi(x) \sim a_{\plusminus} |x|^{-\chi}, \;\;\; x\to 0^{\plusminus}, 
 \eeqq
of the order $\chi \in \{\frac12,1,\frac32,2,\frac52\}$, thus ``covering'' the complete range of admissible singularities $\chi \in (0,3)$. 
In particular, when $\chi = 1$ we have a process
of infinite activity of jumps but of finite variation, whose jump part is similar to the Variance Gamma process (see \cite{Madan1990}), and when $\chi=2$ we obtain a process with infinite variation of jumps, whose jump part is 
similar to the normal inverse Gaussian process.

To define these processes we introduce the function $\Theta_k(x)$ as follows
\beq\label{def_Thetak}
\Theta_{k}(x)=\delta_{k,0}+2\sum\limits_{n\ge 1} n^{2k} e^{-n^2 x}, \;\;\; x>0,
\eeq
where $\delta_{k,0}=1$ if $k=0$ and $\delta_{k,0}=0$ otherwise. Note that $\Theta_0(x)=\theta_3(0,e^{-x})$ (see \cite{Jeffrey2007} for the definition and properties of the theta functions $\theta_i(z;\tau)$), 
and for $k \in {\mathbb N}$ the function $\Theta_k(x)$ is just the $k$-th order  derivative of the theta function $\theta_3(0,e^{-x})$. 
In fact, results similar to the ones presented in this section can be established if we use $\theta_2(0,e^{-x})$ instead of  $\theta_3(0,e^{-x})$.

\begin{definition}\label{def_nu_chi}
For $0< \chi  <3$ and $x\ne 0$ we define
\beqq
\pi_{\chi}(x)= \ind (x>0) c_1\beta_1 e^{-\alpha_1 x}\Theta_{k}(x\beta_1) + \ind(x<0) c_2 \beta_2 e^{\alpha_2 x}\Theta_{k}(-x\beta_2) ,
\eeqq
where $c_i, \alpha_i, \beta_i >0$ and $k=\chi-1/2$.
\end{definition}

\begin{proposition}\label{prop_nu_chi}
 Function $\pi_{\chi}(x)$ has the following asymptotics
\beqq
 \pi_{\chi}(x) \sim \left\{ 
\begin{array}{ll} \Gamma(\chi) c_1\beta_1^{1-\chi} |x|^{-\chi}, \;\;\; x\to 0^+,\\
		  \Gamma(\chi) c_2\beta_2^{1-\chi} |x|^{-\chi}, \;\;\; x\to 0^-.
\end{array}
\right.
 \eeqq
\end{proposition}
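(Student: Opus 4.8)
The plan is to reduce everything to the small-argument behaviour of $\Theta_k$ and then to analyse the latter by comparing a sum with an integral. By the symmetry of the statement it suffices to treat $x\to0^+$, the case $x\to0^-$ following verbatim after $x\mapsto-x$ and the exchange $(c_1,\alpha_1,\beta_1)\leftrightarrow(c_2,\alpha_2,\beta_2)$. Since $e^{-\alpha_1 x}\to1$ as $x\to0^+$ and the constant $\delta_{k,0}$ in (\ref{def_Thetak}) is bounded, the whole problem collapses to establishing
\beqq
\Theta_k(t)\sim \Gamma(\chi)\,t^{-\chi}, \qquad t\to0^+,\qquad \chi=k+\tfrac12 ;
\eeqq
indeed, granting this and setting $t=x\beta_1$, one gets $\Theta_k(x\beta_1)\sim\Gamma(\chi)\beta_1^{-\chi}x^{-\chi}$, whence $\pi_\chi(x)=c_1\beta_1 e^{-\alpha_1 x}\Theta_k(x\beta_1)\sim\Gamma(\chi)c_1\beta_1^{1-\chi}x^{-\chi}$, which is exactly the claim.

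To prove the displayed asymptotics I would write $\Theta_k(t)=\delta_{k,0}+2g(t)$ with $g(t)=\sum_{n\ge1}h(n)$ and $h(u)=u^{2k}e^{-u^2 t}$, and approximate the sum by the integral of $h$. The substitution $s=u^2t$ gives
\beqq
\int\limits_0^\infty h(u)\,\d u=\int\limits_0^\infty u^{2k}e^{-u^2 t}\,\d u=\frac12\,t^{-(k+\frac12)}\int\limits_0^\infty s^{k-\frac12}e^{-s}\,\d s=\frac12\,\Gamma(k+\tfrac12)\,t^{-\chi},
\eeqq
so that $2\int_0^\infty h=\Gamma(\chi)t^{-\chi}$ has precisely the desired leading form. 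The key remaining point is that the Riemann-sum error is of strictly smaller order: estimating each unit interval by the mean value theorem, $|h(n)-\int_n^{n+1}h(u)\,\d u|\le\int_n^{n+1}|h'(u)|\,\d u$, and summing, the total discrepancy between $g(t)$ and $\int_0^\infty h$ is bounded by the total variation $\int_0^\infty|h'(u)|\,\d u$ together with the harmless boundary term $\int_0^1 h=O(1)$. Because $h$ rises on $[0,\sqrt{k/t}\,]$ and decays afterwards, its total variation is at most $2\max_{u\ge0}h(u)=2(k/t)^k e^{-k}=O(t^{-k})$, and $t^{-k}=o(t^{-(k+\frac12)})$ as $t\to0^+$. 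Hence $g(t)=\frac12\Gamma(\chi)t^{-\chi}(1+O(t^{1/2}))$, which yields the claim.

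I expect the only delicate step to be this last estimate, i.e. confirming that the gap between the lattice sum and the integral is genuinely $O(t^{-k})$ and therefore negligible against $t^{-\chi}$; intuitively this works because the peak of $h$ has width of order $t^{-1/2}$, far exceeding the unit spacing of the summation lattice. Two independent routes give the same leading term and can serve as a check. First, a Mellin-transform computation: $\int_0^\infty t^{s-1}g(t)\,\d t=\Gamma(s)\zeta(2s-2k)$, whose rightmost pole sits at $s=\chi$ with residue $\frac12\Gamma(\chi)$, so Mellin inversion returns $g(t)\sim\frac12\Gamma(\chi)t^{-\chi}$. Second, for $k=0$ the Jacobi transformation $\Theta_0(t)=\sqrt{\pi/t}\,(1+2\sum_{n\ge1}e^{-\pi^2 n^2/t})$ gives $\Theta_0(t)\sim\sqrt{\pi/t}=\Gamma(\tfrac12)t^{-1/2}$ up to an exponentially small remainder, and since $\Theta_k=(-1)^k\Theta_0^{(k)}$ one may differentiate this identity $k$ times (legitimate, as the remainder and its derivatives stay exponentially small) and use $\sqrt{\pi}\,(2k-1)!!/2^k=\Gamma(k+\tfrac12)$ to recover the general case.
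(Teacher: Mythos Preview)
Your argument is correct and uses the same Riemann-sum idea as the paper, which simply rescales $n\mapsto hn$ with $h=\sqrt{t}$ and invokes convergence of the Riemann sums to $\int_0^\infty y^{2k}e^{-y^2}\,\d y=\tfrac12\Gamma(\chi)$. You supply more detail than the paper does---an explicit $O(t^{-k})$ bound on the sum--integral discrepancy via the total variation of $u\mapsto u^{2k}e^{-u^2t}$, plus the Mellin and Jacobi cross-checks---but the core route is identical.
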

\begin{proof}
Let $h=\sqrt{x}$. Then using the fact that a definite integral is the limit of the Riemann sum we obtain that as $h \to 0^+$ 
 \beqq
 \sum\limits_{n\ge 1} n^{2k} e^{-n^2 x}= h^{-1-2k}\left[h\sum\limits_{n\ge 1} (hn)^{2k} e^{-(hn)^2 }\right]=x^{-\frac12-k} 
 \left[ \int\limits_0^{\infty} y^{2k} e^{-y^2}dy + o(1) \right].
 \eeqq
To finish the proof use Definition \ref{def_nu_chi} and the above asymptotic relation.
\end{proof}

Proposition \ref{prop_nu_chi} and Definition \ref{def_nu_chi} guarantee that $\int_{\r} x^2 \pi_{\chi}(x) \d x$ exists, thus $\pi(x)$ can be used to define the
 density of a L\'evy measure $\Pi_{\chi}(\d x)=\pi_{\chi}(x) \d x$. We define the L\'evy process $X$ using the the characteristic triplet $(\mu,\sigma,\Pi_{\chi})$
and the L\'evy-Khinchine formula (\ref{eq_Psi}).  

\begin{remark}
Note that in a similar way one can construct L\'evy processes with asymmetric behavior of small positive/negative jumps. One should  
extend Definition \ref{def_nu_chi} to allow parameter $k_1$ \{$k_2$\} to control the order of the singularity of $\pi(x)$ as $x\to 0^+$ \{$x\to 0^-$\}. 
\end{remark}

In the next five sections we will present the results for the L\'evy process $X$ with $\chi \in \{\frac12,1,\frac32,2,\frac52\}$. 
The explicit formulas for the characteristic exponent $\Psi(z)$ are derived using the following approach: 
first we obtain the characteristic exponent $\Psi(z)$ for $\chi=\frac12$ (this is just a simple application of formulas 6.162 in \cite{Jeffrey2007}) and for $\chi=1$, in which case we have to use the following series expansion for the digamma function (see  formula 
8.362.1  in \cite{Jeffrey2007})
 \beqq
  \psi(x)=-\gamma-\frac1{x}+x\sum\limits_{n\ge 1} \frac{1}{n(n+x)}.
 \eeqq
All the other cases, when $\chi \in \{\frac32,2,\frac52\}$, can be easily derived from the above two by using Definition \ref{def_nu_chi}, 
the fact  that $\Theta_{k+1}(x)=-\frac{\d}{\d x} \Theta_k(x)$ (which follows from  (\ref{def_Thetak})) and applying the following Proposition,
(which can be easily established by integration by parts).
\begin{proposition}
${ }$
 \begin{itemize}
  \item [(i)] Assume that $\int_{\r^{\plus}} e^{-\alpha x} \pi(x) \d x < \infty$. Then for $z\in \r$
 \beqq
  \int\limits_{\r^{\plus}} \left(e^{\i z x}-1\right) e^{-\alpha x} \pi'(x) \d x=(\alpha-\i z) f_1(z)-\alpha f_1(0),
 \eeqq
where $f_1(z)=\int_{\r^{\plus}} e^{\i z x-\alpha x} \pi(x) \d x$.
 \item[(ii)] Assume that $\int_{\r^{\plus}} x e^{-\alpha x} \pi(x) \d x < \infty$. Then for $z\in \r$
 \beqq
  \int\limits_{\r^{\plus}} \left(e^{\i z x}-1-\i z x\right) e^{-\alpha x} \pi'(x) \d x=(\alpha-\i z) f_2(z)-\alpha z f_2'(0),
 \eeqq
where $f_2(z)=\int_{\r^{\plus}} \left(e^{\i z x}-1\right)e^{-\alpha x} \pi(x) \d x$.
 \end{itemize}
\end{proposition}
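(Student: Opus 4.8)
The plan is to establish both identities through a single integration by parts, writing the integrand as $u(x)\pi'(x)$ with $u(x)=(e^{\i z x}-1)e^{-\alpha x}$ in part (i) and $u(x)=(e^{\i z x}-1-\i z x)e^{-\alpha x}$ in part (ii), and taking $\pi$ itself as the antiderivative of $\pi'$. Formally this gives $\int_{\r^{\plus}} u\pi'\,\d x=[u\pi]_0^{\infty}-\int_{\r^{\plus}} u'\pi\,\d x$, so the proof reduces to (a) showing that the boundary term vanishes and (b) differentiating the prefactor $u$ and recognizing the resulting integrals.

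For the boundary term at $x\to+\infty$, the hypothesis that $z$ is real gives $|e^{\i z x}|=1$, so $u(x)$ grows at most polynomially, and this is dominated by the exponential decay of $\pi(x)$; hence $u(x)\pi(x)\to 0$. The more delicate end is $x\to 0^{+}$, where $\pi$ may be singular. Here the prefactor vanishes — linearly, $u(x)=O(x)$, in part (i), and quadratically, $u(x)=O(x^2)$, in part (ii) — while the respective integrability hypotheses $\int_{\r^{\plus}} e^{-\alpha x}\pi\,\d x<\infty$ and $\int_{\r^{\plus}} x e^{-\alpha x}\pi\,\d x<\infty$ bound the strength of the singularity. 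Combined with the monotonicity of $\pi$ on $\r^{\plus}$ (in the setting of this paper a decreasing sum of exponentials, so that integrability of $\pi$ forces $x\pi(x)\to 0$ and integrability of $x\pi$ forces $x^2\pi(x)\to 0$), this yields $u(x)\pi(x)\to 0$ at the origin as well. I regard the justification of this lower boundary term as the main point requiring care, since it is precisely where the two different hypotheses enter and dictate the two different cutoffs appearing in $u$.

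It then remains to compute $u'$ and regroup. In part (i) one finds $u'(x)=(\i z-\alpha)e^{(\i z-\alpha)x}+\alpha e^{-\alpha x}$, so that $-\int_{\r^{\plus}}u'\pi\,\d x$ separates into $(\alpha-\i z)f_1(z)-\alpha f_1(0)$ upon reading off $f_1(z)=\int_{\r^{\plus}}e^{\i z x-\alpha x}\pi\,\d x$ and its value at $z=0$. In part (ii), using $\tfrac{\d}{\d x}(e^{\i z x}-1-\i z x)=\i z(e^{\i z x}-1)$ gives $u'(x)=\i z(e^{\i z x}-1)e^{-\alpha x}-\alpha(e^{\i z x}-1-\i z x)e^{-\alpha x}$; integrating against $\pi$ produces $-\i z f_2(z)+\alpha f_2(z)-\alpha\i z\int_{\r^{\plus}}x e^{-\alpha x}\pi\,\d x$. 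The only remaining step is to identify the residual moment integral with $f_2'(0)$: differentiating $f_2(z)=\int_{\r^{\plus}}(e^{\i z x}-1)e^{-\alpha x}\pi\,\d x$ under the integral sign — legitimate because the hypothesis $\int_{\r^{\plus}}x e^{-\alpha x}\pi\,\d x<\infty$ supplies the dominating function — gives $f_2'(0)=\i\int_{\r^{\plus}}x e^{-\alpha x}\pi\,\d x$, whence $\i z\int_{\r^{\plus}}x e^{-\alpha x}\pi\,\d x=z f_2'(0)$. Substituting this back collapses the expression to $(\alpha-\i z)f_2(z)-\alpha z f_2'(0)$, as claimed.
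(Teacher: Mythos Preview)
Your proof is correct and follows exactly the approach the paper indicates: the paper's entire proof consists of the parenthetical remark ``which can be easily established by integration by parts,'' and you have simply supplied the details of that integration by parts, including the care with boundary terms that the paper omits.
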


Asymptotic expansions for the large solutions of the equation $\phi(\zeta)=q$ are obtained using  exactly 
the same technique as in the proof of the Theorem 5 in \cite{Kuznetsov2009}, 
and since the derivation of these expressions is rather lengthy and tedious we have decided to omit it. 
The interested reader who decides to verify these formulas might want to use a symbolic 
computation software to make the algebraic manipulations more enjoyable. 

All the formulas below involve the parameter $\sigma\ge 0$, positive numbers $\alpha_i, \beta_i, c_i$ which define the L\'evy measure $\Pi_{\chi}$ via Definition \ref{def_nu_chi}, and two additional parameters $\gamma$ and $\rho$. The parameter $\gamma$ is uniquely determined by the condition $\Psi(0)=0$ and the parameter $\rho$ is
responsible for the linear drift. In all the cases when $\chi<2$, the jump part of the process has bounded variation, 
thus we can take the cutoff function in the L\'evy-Khintchine formula
(\ref{def_Psi}) as $h(x)\equiv 0$, and then we have $\rho=\mu$. 
When $\chi\ge 2$ we take $h(x)\equiv 1$ and then $\rho$ can be uniquely expressed in terms of the characteristic triplet $(\mu,\sigma,\Pi_{\chi})$ via condition
$\e[X_1]=\i \Psi'(0)=\mu$.

\subsection{The family of processes with $\chi=1/2$}\label{sect12}

\begin{itemize}
 \item[{\bf (i)}] The characteristic exponent can be computed as follows 
\beqq
\Psi(z)&=&\frac12\sigma^2 z^2-\i \rho z-c_1\pi \left((\alpha_1-\i z)\beta_1^{-1}\right)^{-\frac12} \coth\left( \pi \sqrt{(\alpha_1-\i z)\beta_1^{-1}} \right) 
 \\ &-&
c_2\pi \left((\alpha_2+\i z)\beta_2^{-1}\right)^{-\frac12} \coth\left( \pi \sqrt{(\alpha_2+\i z)\beta_2^{-1}} \right) +\gamma.
\eeqq
\item[{\bf (ii)}] For $n\ge 1$ we have  $\rho_{n}=\alpha_1+\beta_1 (n-1)^2$ and $\hat\rho_n=\alpha_2 +\beta_2 (n-1)^2$. 
\item[{\bf (iii)}] If $\sigma\ne 0$ then the large positive solutions to $\phi(\zeta)=q$ satisfy
\beqq
\zeta \sim \beta_2 n^2+\alpha_2+\frac{4}{\sigma^2}\frac{c_2}{\beta_2} n^{-4}+ \frac{8}{\sigma^4}  \frac{c_2}{\beta_2^2}(\mu-\alpha_2\sigma^2)n^{-6}+O\left(n^{-8}\right), \;\;\; n\to +\infty.
\eeqq
\item[{\bf (iv)}] 
If $\sigma=0$ and $\mu \ne 0$ then the large positive solutions to $\phi(\zeta)=q$ satisfy
\beqq
\zeta \sim\beta_2 n^2+\alpha_2-\frac{2c_2}{\mu} n^{-2}+ \frac{2}{\mu^2} \frac{c_2}{\beta_2 }(\mu\alpha_2+\gamma+q)n^{-4}+O\left(n^{-5}\right), \;\;\;
n\to +\infty. 
\eeqq
\end{itemize}

\subsection{The family of processes with $\chi=1$}\label{sect22}

\begin{itemize}
  \item[{\bf (i)}] The characteristic exponent can be computed as follows 
\beqq
\Psi(z)&=&\frac12\sigma^2 z^2-\i \rho z+c_1 \psi\left(\i \sqrt{(\alpha_1-\i z)\beta_1^{-1}} \right)+c_1 \psi\left(-\i \sqrt{(\alpha_1-\i z)\beta_1^{-1}} \right)\\
&+&c_2 \psi\left(\i \sqrt{(\alpha_2+\i z)\beta_2^{-1}} \right)+c_2 \psi\left(-\i \sqrt{(\alpha_2+\i z)\beta_2^{-1}} \right)-\gamma.
\eeqq
\item[{\bf (ii)}] For $n\ge 1$ we have  $\rho_{n}=\alpha_1+\beta_1 n^2$ and $\hat\rho_n=\alpha_2 +\beta_2 n^2$. 
\item[{\bf (iii)}]  If $\sigma\ne 0$ then the large positive solutions to $\phi(\zeta)=q$ satisfy
\beqq
\zeta \sim \beta_2 n^2+\alpha_2+\frac{4}{\sigma^2}\frac{c_2}{\beta_2} n^{-3}+ \frac{8}{\sigma^4}  \frac{c_2}{\beta_2^2}(\mu-\alpha_2\sigma^2)n^{-5}+O\left(n^{-7}\right), \;\;\; n\to +\infty. 
\eeqq
 \item[{\bf (iv)}]  If $\sigma=0$ and $\mu \ne 0$ then the large positive solutions to $\phi(\zeta)=q$ satisfy
\beqq
\zeta \sim\beta_2 n^2+\alpha_2-\frac{2c_2}{\mu} n^{-1}+ \frac{2}{\mu^2} \frac{c_2}{\beta_2}(2(c_1+c_2)\ln(n)+c_0)n^{-3}+O\left(n^{-4}\ln(n)\right), \;\;\;
n\to +\infty, 
\eeqq
where $c_0=\mu\alpha_2-\gamma+q+c_1\ln\left(\frac{\beta_2}{\beta_1}\right)$.
\end{itemize}

\subsection{The family of processes with $\chi=3/2$}\label{sect32}

\begin{itemize}
  \item[{\bf (i)}]  The characteristic exponent can be computed as follows 
\beqq
\Psi(z)&=&\frac12\sigma^2 z^2-\i \rho z+c_1\pi \sqrt{(\alpha_1-\i z)\beta_1^{-1}} \coth\left( \pi \sqrt{(\alpha_1-\i z)\beta_1^{-1}}\right)\\
&+&c_2\pi \sqrt{(\alpha_2+\i z)\beta_2^{-1}} \coth\left( \pi \sqrt{(\alpha_2+\i z)\beta_2^{-1}}\right)-\gamma.
\eeqq
\item[{\bf (ii)}] For $n\ge 1$ we have  $\rho_{n}=\alpha_1+\beta_1 n^2$ and $\hat\rho_n=\alpha_2 +\beta_2 n^2$.
\item[{\bf (iii)}] If $\sigma\ne 0$ then the large positive solutions to $\phi(\zeta)=q$ satisfy
\beqq
\zeta \sim \beta_2 n^2+\alpha_2+\frac{4}{\sigma^2}\frac{c_2}{\beta_2} n^{-2}+ \frac{8}{\sigma^4} \frac{c_2}{\beta_2^2 }(\mu-\alpha_2\sigma^2)n^{-4}+O\left(n^{-6}\right), \;\;\; n\to +\infty.
\eeqq
\item[{\bf (iv)}] 
If $\sigma=0$ and $\mu\ne 0$  then the large positive solutions to $\phi(\zeta)=q$ satisfy
\beqq
\zeta \sim \beta_2 n^2+\alpha_2-\frac{2c_2}{\mu}+\frac{2\pi}{\mu^2} \frac{c_1 c_2}{\sqrt{\beta_1 \beta_2}}n^{-1} +O\left(n^{-2}\right), \;\;\; n\to +\infty. 
\eeqq
\end{itemize}

\subsection{The family of processes with $\chi=2$}\label{sect42}

\begin{itemize}
  \item[{\bf (i)}]  The characteristic exponent can be computed as follows 
\beqq
\Psi(z)&=&\frac12\sigma^2 z^2-\i \rho z-c_1 (\alpha_1-\i z)\beta_1^{-1}  
 \left[ \psi\left(\i \sqrt{(\alpha_1-\i z)\beta_1^{-1}} \right)+\psi\left(-\i \sqrt{(\alpha_1-\i z)\beta_1^{-1}} \right) \right]\\
&-&c_2 (\alpha_2+\i z)\beta_2^{-1}  
 \left[ \psi\left(\i \sqrt{(\alpha_2+\i z)\beta_2^{-1}} \right)+\psi\left(-\i \sqrt{(\alpha_2+\i z)\beta_2^{-1}} \right) \right]+\gamma.
\eeqq
\item[{\bf (ii)}] For $n\ge 1$ we have  $\rho_{n}=\alpha_1+\beta_1 n^2$ and $\hat\rho_n=\alpha_2 +\beta_2 n^2$.
\item[{\bf (iii)}]  If $\sigma\ne 0$ then the large positive solutions to $\phi(\zeta)=q$ satisfy
\beqq
\zeta \sim \beta_2 n^2+\alpha_2+\frac{4}{\sigma^2}\frac{c_2}{\beta_2} n^{-1}+O\left(n^{-3}\ln(n)\right), \;\;\; n\to +\infty. 
\eeqq
 \end{itemize}

\subsection{The family of processes with $\chi=5/2$}\label{sect52}

\begin{itemize}
  \item[{\bf (i)}]  The characteristic exponent can be computed as follows 
\beqq
\Psi(z)&=&\frac12\sigma^2 z^2-\i \rho z-c_1\pi\left((\alpha_1-\i z)\beta_1^{-1}\right)^{\frac32} \coth\left( \pi \sqrt{(\alpha_1-\i z)\beta_1^{-1}}\right)\\
&-&c_2\pi\left((\alpha_2+\i z)\beta_2^{-1}\right)^{\frac32} \coth\left( \pi \sqrt{(\alpha_2+\i z)\beta_2^{-1}}\right)+\gamma.
\eeqq
\item[{\bf (ii)}] For $n\ge 1$ we have  $\rho_{n}=\alpha_1+\beta_1 n^2$ and $\hat\rho_n=\alpha_2 +\beta_2 n^2$.
\item[{\bf (iii)}]  If $\sigma\ne 0$ then the large positive solutions to $\phi(\zeta)=q$ satisfy
\beqq
\zeta \sim \beta_2 n^2+\alpha_2+\frac{4}{\sigma^2}\frac{c_2}{\beta_2 }- \frac{8\pi}{\sigma^4} \frac{c_1c_2}{(\beta_1\beta_2)^{\frac32}}n^{-1}+O\left(n^{-2}\right), \;\;\; n\to +\infty. 
\eeqq
\item[{\bf (iv)}]  If $\sigma=0$ then the large positive solutions to $\phi(\zeta)=q$ satisfy
 \beqq
 \zeta\sim \beta_2 (n+w_0)^2+ \alpha_2+\frac{2\rho}{\pi^2} \frac{c_2\beta_2^2\beta_1^3}{c_1^2\beta_2^3+c_2^2\beta_1^3}+O\left(n^{-1}\right), \;\;\; n\to +\infty, 
 \eeqq
where 
\beqq
w_0=\frac{1}{\pi}\arctan\left(\dfrac{c_2\beta_1^{\frac32}}{c_1\beta_2^{\frac32}} \right).
\eeqq
\end{itemize}

\section{Conclusion}\label{conclusion}

In this paper we have extended results in \cite{Kuznetsov2009} in several directions. First of all, for a very large class of L\'evy processes 
(having infinitely many parameters) we have proved that the Wiener-Hopf factors can be expressed as infinite products of linear factors and that the distribution
of $S_{\ee(q)}$ and $I_{\ee(q)}$ can be identified as an infinite mixture of exponential distributions.
Second, we have introduced
five  eight-parameter families of L\'evy processes, which have a wide range of behavior of small jumps, 
including one family having jumps of finite activitity
(the density of the L\'evy measure has a singularity at zero of order $\chi=\frac12$), 
two families with jumps of infinite activity but finite variation  ($\chi=1$ or $\chi=\frac32$) and two families with jumps of infinite variation 
($\chi=2$ or $\chi=\frac52$). We have also derived precise asymptotic expressions for the large solutions to $\phi(z)=q$, which are very
 useful for numerical computations. 
These five families of processes have rather simple form of the characteristic exponent, especially when $\chi$ is a half-integer, 
in which case $\Psi(z)$ is given in terms of elementary trigonometric functions. This fact and the availability of efficient 
numerical schemes for computing the Wiener-Hopf factors and the distribution of extrema make 
these processes very interesting for Mathematical Modeling, 
in particular in the areas of Mathematical Finance and Insurance Mathematics.

\newpage



\end{document}